\documentclass[12pt]{article}
\usepackage{pifont}
\usepackage{amsmath}
\allowdisplaybreaks[4]
\usepackage{amssymb,float}
\usepackage{amsthm,graphicx,pifont}
\usepackage{tikz}
\usetikzlibrary{trees}
\usepackage{graphicx,psfrag,float,subfigure}
\usepackage{caption,enumerate}
\usepackage[scale=0.8,a4paper]{geometry}
\usepackage[colorlinks]{hyperref}
\usepackage{amsthm,amsmath,amssymb}
\usepackage{mathrsfs}
\newtheorem{theorem}{Theorem}[section]

\newtheorem{lemma}[theorem]{Lemma}
\newtheorem{proposition}[theorem]{Proposition}

\title{ Double Italian domination in trees}
\author{Weiping Shang$^{1*}$, \ {Shanshan Zhang$^{2}$\thanks{ e-mail: shangwp@zzu.edu.cn, zhangss@jsnu.edu.cn.}}\\
{\small $^{1}$  School of Mathematics and Statistics, Zhengzhou University,
 Zhengzhou 450001, China}\\
{\small $^{2}$  School of Mathematics and Statistics, Jiangsu Normal University,
Xuzhou 221116, China}\\
}
\date{}\makeatother
\begin{document}
\maketitle

\begin{abstract}
Let $G$ be a graph with vertex set $V=V(G)$. A double Roman dominating function on a graph $G$ is a function $f : V \to \{0,1,2,3\}$ satisfying the conditions that if $f(v) = 0$, then vertex $v$ must have at least two neighbors in $V_2$ or one neighbor in $V_3$; if $f(v) = 1$, then vertex $v$ must have at least one neighbor in $V_2 \cup V_3$. The weight of a double Roman dominating function $f$ is the sum $f(V) = \sum_{v \in V} f(v)$, and the double Roman domination number $\gamma_{dR}(G)$ is the minimum weight of a double Roman dominating function on $G$. A double Italian dominating function on a graph $G$ is a function $f : V \to \{0,1,2,3\}$ satisfying the condition that for every vertex $u \in V$, if $f(u) \in \{0,1\}$, then $\sum_{v \in N[u]} f(v) \ge 3$. The double Roman domination number $\gamma_{dI}(G)$ is the minimum weight of a double Italian dominating function on $G$. Mojdeh and Volkmann [D.A. Mojdeh and L. Volkmann, Roman {3}-domination (double Italian domination), Discrete Appl. Math. 283 (2020), 555--564] proved that $\gamma_{dI}(T) = \gamma_{dR}(T)$ for any tree $T$. However, we find that there is a minor issue in the proof. In this paper, we first prove that $\gamma_{dI}(T) \neq \gamma_{dR}(T)$. Subsequently, we present a sharp bound on the double Italian domination number of any non-trivial tree $T$, and characterize the trees attaining this bound.
\end{abstract}

{\bf Keywords} domination, double Roman domination number, double Italian domination number

{\bf 2020 MR Subject Classification} \ 05C70

\section{Introduction}

In this paper, we only consider simple undirected graphs. For standard graph-theoretical notions and terminology, we refer the reader to \cite{Bondy2008}. Let $G$ be a graph with vertex set $V = V(G)$ and edge set $E = E(G)$. The \textit{order} of $G$ is the number of its vertices. The open neighborhood of a vertex $v \in V$ is the set $N(v)=\{u \in V \mid uv \in E\}$, and its closed neighborhood is $N[v] = N(v) \cup \{v\}$. Vertices in $N(v)$ are called the \textit{neighbors} of $v$. The \textit{degree} of vertex $v \in V$ is $d(v) = |N(v)|$. A vertex with degree one is called a \textit{leaf}, and its neighbor is a \textit{support vertex}. A support vertex with two or more leaf neighbors is called a \textit{strong support vertex}. An edge adjacent to a leaf is called a \textit{pendant edge}.

A graph is \textit{trivial} if it has only one vertex, and \textit{non-trivial} otherwise. A tree is an acyclic connected graph. For any $S \subseteq V$, we denote by $G - S$ the graph obtained from $G$ by deleting all the vertices in $S$ together with all edges incident with the vertices in $S$, and by $G[S]$ the subgraph induced by $S$.

A set $S \subseteq V$ in a graph $G$ is called a \textit{dominating set} if every vertex of $G$ is either in $S$ or adjacent to a vertex of $S$. The \textit{domination number} $\gamma(G)$ equals the minimum cardinality of a dominating set in $G$, and a dominating set of $G$ with cardinality $\gamma(G)$ is called a $\gamma$-set of $G$.

Given a graph $G$ and a positive integer $k \geq 2$, assume that $f: V \to \{0, 1, 2, \dots, k\}$ is a function, and suppose that $(V_0, V_1, V_2, \dots, V_k)$ is the ordered partition of $V$ induced by $f$, where $V_i = \{v \in V: f(v) = i\}$ for $i \in \{0, 1, \dots, k\}$. So we can write $f = (V_0, V_1, V_2, \dots, V_k)$.

A \textit{Roman dominating function} on a graph $G$ is a function $f : V \to \{0,1,2\}$ satisfying the condition that every vertex $u$ for which $f(u) = 0$ is adjacent to at least one vertex $v$ for which $f(v) = 2$. The \textit{weight} of a Roman dominating function is the sum $f(V) = \sum_{v \in V} f(v)$. The minimum weight of a Roman dominating function on $G$ is called the \textit{Roman domination number} of $G$ and is denoted $\gamma_R(G)$. A Roman dominating function on $G$ with weight $\gamma_R(G)$ is called a $\gamma_R$-function of $G$.

The original study of Roman domination was motivated by the defense strategies used to defend the Roman Empire during the reign of Emperor Constantine the Great. He decreed that for all cities in the Roman Empire, at most two legions should be stationed. Further, if a location having no legions was attacked, then it must be within the vicinity of at least one legion being stationed, and so that one of the two legions could be sent to defend the attacked city. This part of the history of the Roman Empire gave rise to the mathematical concept of Roman domination, originally defined and discussed by Stewart \cite{Stewart1999} in 1999, and subsequently developed by Cockayne et al. \cite{Cockayne2004} in 2004. Since then, a lot of related variations and generalizations have been studied.

Beeler et al. \cite{Beeler2016} introduced the concept of double Roman domination. What they propose is a stronger version of Roman domination that doubles the protection by ensuring that any attack can be defended by at least two legions.

A \textit{double Roman dominating function} on a graph $G$ is a function $f : V \to \{0,1,2,3\}$ satisfying the following conditions:
\begin{enumerate}
    \item[(i)] If $f(v) = 0$, then vertex $v$ must have at least two neighbors in $V_2$ or one neighbor in $V_3$.
    \item[(ii)] If $f(v) = 1$, then vertex $v$ must have at least one neighbor in $V_2 \cup V_3$.
\end{enumerate}
The weight of a double Roman dominating function $f$ on $G$ is the sum $f(V) = \sum_{v \in V} f(v)$, and the minimum weight of $f$ over every double Roman dominating function on $G$ is called the \textit{double Roman domination number} of $G$. We denote this number with $\gamma_{dR}(G)$, and a double Roman dominating function of $G$ with weight $\gamma_{dR}(G)$ is a $\gamma_{dR}$-function of $G$.

Mojdeh and Volkmann\cite{Mojdeh2020} defined a variant of double Roman domination, namely double Italian domination (Roman 3-domination). Formally, a \textit{ double Italian dominating function} on a graph $G$ is a function $f : V \to \{0,1,2,3\}$ satisfying the condition that for every vertex $u \in V$, if $f(u) \in \{0,1\}$, then $\sum_{v \in N[u]} f(v)\ge 3$. Specifically, it satisfies the following conditions:
\begin{enumerate}
    \item[(i)] If $f(v) = 0$, then vertex $v$ must have at least three neighbors in $V_1$, or one neighbor in $V_1$ and one neighbor in $V_2$, or two neighbors in $V_2$, or one neighbor in $V_3$.
    \item[(ii)] If $f(v) = 1$, then vertex $v$ must have at least two neighbors in $V_1$, or one neighbor in $V_2 \cup V_3$.
\end{enumerate}
The weight of a double Italian dominating function $f$ on $G$ is the sum $f(V)= \sum_{v \in V} f(v)$, and the minimum weight of $f$ over every double Italian dominating function on $G$ is called the \textit{double Italian domination number} of $G$. We denote this number with $\gamma_{dI}(G)$, and a double Italian dominating function of $G$ with weight $\gamma_{dI}(G)$ is a $\gamma_{dI}$-function of $G$.

Compared with the requirements of a double Roman dominating function, a double Italian dominating function relaxes the restrictions on the domination method and is more flexible. It is easy to see that every double Roman dominating function is a double Italian dominating function, so $\gamma_{dI}(G) \leq \gamma_{dR}(G)$.
For any non-trivial tree $T$, Mojdeh and Volkmann\cite{Mojdeh2020} proved that $\gamma_{dI}(T) = \gamma_{dR}(T)$ for any tree $T$. However, we find that there is a minor issue in the proof.

In this paper, we first prove that $\gamma_{dI}(T) \neq \gamma_{dR}(T)$. Subsequently, we present a sharp bound on the double Italian domination number of any non-trivial tree $T$, and give a characterization of trees with $\gamma_{dI}(T) = 2\gamma(T) + 1$.

 \section{Preliminaries}
In this section, we first present some preliminary results, which are used in the proof of our main results.

\begin{proposition}[\cite{Beeler2016}]\label{pro1}
In a double Roman dominating function of weight $\gamma_{dR}(G)$, no vertex needs to be assigned the value 1.
\end{proposition}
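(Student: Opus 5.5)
The plan is a weight-preserving (indeed weight-decreasing) exchange argument. Among all $\gamma_{dR}$-functions $f=(V_0,V_1,V_2,V_3)$ of $G$, choose one with $|V_1|$ as small as possible. If $V_1=\emptyset$ we are done, so suppose $V_1\neq\emptyset$. The aim is to modify $f$ into a double Roman dominating function that either has smaller weight, contradicting minimality, or has the same weight and fewer vertices labelled $1$, contradicting the choice of $f$.

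\textbf{Case 1: $|V_1|\ge 2$.} Pick distinct $u,v\in V_1$ and define $g$ by $g(u)=0$, $g(v)=2$, and $g=f$ elsewhere. The weight is unchanged and $|V_1|$ drops by two. We must check that $g$ is double Roman. Raising a label never violates the conditions at other vertices, so only $u$ needs attention. Since $f(u)=1$, the vertex $u$ has a neighbor $w\in V_2\cup V_3$.
\begin{itemize}
\item If $f(w)=3$, then $u$ is fine with $g(u)=0$.
\item If $f(w)=2$, then $u$ has only one guaranteed neighbor in $V_2$, so $g$ may fail at $u$.
\end{itemize}
To avoid the second situation, use a different exchange: set $g(u)=2$, $g(v)=0$ instead, or handle $u$ and $v$ separately as in Case 2. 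This is the step I expect to be the main obstacle. A naive pairing can leave a $0$ vertex with only one neighbor in $V_2$, so the exchange must be chosen according to the type of neighbor.

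\textbf{Case 2: handling a single vertex.} Let $u\in V_1$ with a neighbor $w\in V_2\cup V_3$.
\begin{itemize}
\item If $f(w)=3$, set $g(u)=0$. This reduces the weight by one, and every condition still holds because $u$ is dominated by $w\in V_3$ and all other labels are unchanged. This contradicts minimality, so every vertex of $V_1$ has a neighbor in $V_2$ but none in $V_3$.
\item If $f(w)=2$, set $g(w)=3$ and $g(u)=0$. The weight is preserved, $u$ is dominated by a $3$, and the other neighbors of $w$ only gained. Hence $g$ is a $\gamma_{dR}$-function with fewer $1$'s, contradicting the choice of $f$.
\end{itemize}

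This single-vertex move in fact settles every case, so Case 1 is unnecessary. The final proof will simply apply Case 2 to any $u\in V_1$. The one point to verify carefully is that changing $w$ from $2$ to $3$ and $u$ from $1$ to $0$ breaks no condition elsewhere. Vertices other than $u$ see their neighborhood labels unchanged or increased, and the conditions are monotone under increases, so this holds. This yields a contradiction, hence $V_1=\emptyset$, as the proposition claims.
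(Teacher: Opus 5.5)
Your final argument (Case 2 alone, applied to a $\gamma_{dR}$-function minimizing $|V_1|$) is correct and is the standard exchange argument of Beeler et al.~\cite{Beeler2016}, which this paper only cites: a $1$ next to a $3$ can be lowered to $0$, contradicting minimality of the weight, and a $1$ next to a $2$ can be traded for raising that $2$ to a $3$. One correction to your justification: neighbors of $u$ do see a label decrease (from $1$ to $0$), so monotonicity under increases is not why nothing breaks; the actual reason is that the double Roman conditions only count neighbors in $V_2\cup V_3$, so a label $1$ never helps another vertex. State this explicitly, because it is exactly what fails for double Italian functions, where three neighbors labelled $1$ can dominate a $0$.
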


By Proposition \ref{pro1}, we can assume that $V_1 = \emptyset$ for all double Roman dominating functions under consideration.

\begin{proposition}[\cite{Beeler2016}]\label{pro2}
For any graph $G$, $2\gamma(G) \leq \gamma_{dR}(G) \leq 3\gamma(G)$.
\end{proposition}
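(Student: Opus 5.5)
The statement to prove is Proposition \ref{pro2}: for every graph $G$, $2\gamma(G)\le \gamma_{dR}(G)\le 3\gamma(G)$. I will prove the two inequalities separately. The upper bound comes from a direct construction. The lower bound takes a minimum double Roman dominating function and extracts a dominating set from it.

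\emph{Upper bound.} Let $S$ be a $\gamma$-set of $G$. Define $f(v)=3$ for $v\in S$ and $f(v)=0$ otherwise. Every vertex with value $0$ lies outside $S$, so it has a neighbor in $S=V_3$, and condition (i) holds. No vertex receives the value $1$, so condition (ii) holds vacuously. Hence $f$ is a double Roman dominating function, and $\gamma_{dR}(G)\le f(V)=3|S|=3\gamma(G)$.

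\emph{Lower bound.} By Proposition \ref{pro1}, we may take a $\gamma_{dR}$-function $f=(V_0,\emptyset,V_2,V_3)$. Put $D=V_2\cup V_3$. Each vertex of $V_0$ has a neighbor in $V_3$ or two neighbors in $V_2$, so in either case it has a neighbor in $D$. Thus $D$ is a dominating set. Therefore $\gamma(G)\le |D|=|V_2|+|V_3|$, and
\[
\gamma_{dR}(G)=2|V_2|+3|V_3|\ge 2(|V_2|+|V_3|)\ge 2\gamma(G).
\]

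The only step that needs care is justifying $V_1=\emptyset$, and Proposition \ref{pro1} supplies this. The lower bound can also be obtained without that proposition. Take $D=V_1\cup V_2\cup V_3$ and note $f(V)\ge |V_1|+2|V_2|+3|V_3|$. Each vertex of $V_1$ has a neighbor in $V_2\cup V_3$, so $V_2\cup V_3$ alone dominates $V_0\cup V_1$. The vertices of $V_2\cup V_3$ dominate themselves, so $V_2\cup V_3$ is a dominating set. Then $\gamma(G)\le |V_2|+|V_3|$, and
\[
f(V)\ge 2|V_2|+3|V_3|\ge 2\gamma(G),
\]
which gives the same conclusion.
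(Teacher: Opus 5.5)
Your proof is correct. The paper gives no proof of this proposition and simply cites Beeler et al.; their argument is the standard one you reproduce, namely the value $3$ on a $\gamma$-set for the upper bound and the fact that $V_2\cup V_3$ dominates $G$ for the lower bound. Your second version of the lower bound, which avoids Proposition~\ref{pro1}, is also valid, though the set $D=V_1\cup V_2\cup V_3$ you introduce there is never used and can be dropped.
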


\begin{proposition}[\cite{Beeler2016}]\label{pro3}
If $T$ is a non-trivial tree, then $\gamma_{dR}(T) \geq 2\gamma(T) + 1$.
\end{proposition}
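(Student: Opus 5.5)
The plan is to take a $\gamma_{dR}$-function $f=(V_0,V_1,V_2,V_3)$ of $T$ with $V_1=\emptyset$, which Proposition \ref{pro1} allows. Every vertex of $V_0$ then has a neighbour in $V_2\cup V_3$, so $S=V_2\cup V_3$ is a dominating set. This gives
\[
\gamma_{dR}(T)=2|V_2|+3|V_3|\ \ge\ 2|S|+|V_3|\ \ge\ 2\gamma(T)+|V_3| .
\]
If $V_3\neq\emptyset$ we are done at once. So assume $V_3=\emptyset$. Then $S=V_2$, and every vertex outside $S$ has at least two neighbours in $S$, i.e. $S$ is a $2$-dominating set of $T$. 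It therefore suffices to prove the following claim: \emph{every $2$-dominating set $S$ of a non-trivial tree $T$ satisfies $|S|\ge\gamma(T)+1$.} Then $\gamma_{dR}(T)=2|S|\ge 2\gamma(T)+2$.

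I would prove the claim by induction on $n=|V(T)|$. For $n=2$ both vertices are leaves and each has only one neighbour, so $S=V(T)$ and $|S|=2=\gamma+1$. In general, no leaf can lie outside $S$, since it has only one neighbour. Take a diametral path $v=x_0,u=x_1,w=x_2,\dots$ of $T$. Then $v$ is a leaf, and every child of $u$ (rooting $T$ at the far end of the path) is a leaf, hence lies in $S$. We split into three cases.

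\emph{Case $u\in S$.} Then $S\setminus\{v\}$ still dominates $T$, so $\gamma(T)\le|S|-1$.

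\emph{Case $u\notin S$ and $u$ has at least two leaf children.} Replace all of them by $u$. The result is a dominating set of size at most $|S|-1$.

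\emph{Case $u\notin S$ and $v$ is the only child of $u$.} Since $u$ needs two neighbours in $S$, its parent $w$ must lie in $S$. If $T$ is the path $vuw$, then $S=\{v,w\}$ and $\gamma=1$, so the claim holds. Otherwise put $T'=T-\{u,v\}$, a non-trivial tree. The set $S'=S\setminus\{v\}$ is $2$-dominating in $T'$, because removing $u$ and $v$ only affects the neighbourhood of $w$, and $w\in S'$. By induction $|S'|\ge\gamma(T')+1$. Since any dominating set of $T'$ plus $u$ dominates $T$, we have $\gamma(T)\le\gamma(T')+1$. Hence $|S|=|S'|+1\ge\gamma(T')+2\ge\gamma(T)+1$.

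The main obstacle is this last case. There a naive swap of $v$ for $u$ does not lower the size, which is why the induction on $T-\{u,v\}$, using that $w\in S$ keeps $S'$ $2$-dominating, is needed. The remaining cases and the reduction from $\gamma_{dR}$ to $2$-domination are routine.
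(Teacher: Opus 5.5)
Your proof is correct. Note that the paper gives no proof of this proposition; it is simply quoted from \cite{Beeler2016}.

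The closest argument in the paper is the proof of Theorem \ref{main th2}, which, together with $\gamma_{dI}(T)\le\gamma_{dR}(T)$, also implies the bound. There the diametral-path induction is run directly on a weight function. Lemma \ref{le6} and exchange arguments normalize the values on $u$, $w$ and $x$, and then $\{u,w\}$ or $\{u,w,x\}$ is deleted.

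You instead use Proposition \ref{pro1} up front. Writing $\gamma_{dR}(T)=2|V_2\cup V_3|+|V_3|$ disposes of the case $V_3\neq\emptyset$. What remains is the purely set-theoretic fact that a $2$-dominating set of a non-trivial tree has at least $\gamma(T)+1$ vertices. Your induction for that fact needs no exchange arguments, and all three cases check out. In the last case, $w\in S'$ is exactly what keeps $S'$ $2$-dominating in $T-\{u,v\}$.

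Your route is shorter and isolates a standard, reusable fact. The paper's route proves the stronger $\gamma_{dI}$ statement, where your reduction is unavailable because weight-$1$ vertices cannot be eliminated, as the tree $T_1$ shows.

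Incidentally, the $2$-domination claim also has a non-inductive proof. If $S$ contains two adjacent vertices, deleting one of them leaves a dominating set. If $S$ is independent, there are at least $2|V\setminus S|$ edges between $S$ and $V\setminus S$, but only $n-1=|S|+|V\setminus S|-1$ edges altogether. Hence $|V\setminus S|\le|S|-1$, and $V\setminus S$ dominates $T$.
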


For a positive integer $t$, a \textit{wounded spider} is obtained from a star $K_{1,t}$ by subdividing at most $t-1$ edges. Similarly, for an integer $t\ge 2$, a \textit{healthy spider} is obtained from a star $K_{1,t}$ by subdividing all of its edges. In a wounded spider, a vertex of degree $t$ is called the \textit{head vertex}, and the vertex that is distance two from the head vertex is the \textit{foot vertex}. The neighbor of a foot vertex is called the \textit{subdivision vertex}. The head and foot vertices are well defined except when the wounded spider is the path on two or four vertices. For $P_2$, we will consider both vertices to be head vertices, and in the case of $P_4$, we will consider both end vertices as foot vertices and both interior vertices as head vertices.

\begin{proposition}[\cite{Ahangar2017, Zhang2018}]\label{pro4}
If $T$ is a non-trivial tree, then $\gamma_{dR}(T) = 2\gamma(T) + 1$ if and only if $T$ is a wounded spider.
\end{proposition}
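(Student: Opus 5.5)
We read Proposition \ref{pro4} as the statement to be proved: for a non-trivial tree $T$, $\gamma_{dR}(T)=2\gamma(T)+1$ if and only if $T$ is a wounded spider. The plan is to treat the two directions separately, with the lower bound of Proposition \ref{pro3} doing most of the work.

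\emph{Sufficiency.} Suppose $T$ is a wounded spider obtained from $K_{1,t}$ by subdividing $s\le t-1$ edges. Then $\gamma(T)=s+1$: take the head together with the $s$ subdivision vertices (for $P_2$ and $P_4$ we check directly). Define $f$ by assigning $3$ to the head, $2$ to each subdivision vertex, and $0$ elsewhere. Every foot vertex has a neighbour in $V_2$. Every unsubdivided leaf, and every subdivision vertex, is adjacent to the head, which lies in $V_3$. So $f$ is a double Roman dominating function of weight $3+2s=2\gamma(T)+1$. Combined with Proposition \ref{pro3}, this gives equality.

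\emph{Necessity.} Let $f=(V_0,\emptyset,V_2,V_3)$ be a $\gamma_{dR}$-function; by Proposition \ref{pro1} we may take $V_1=\emptyset$. Then $f$ has weight $2|V_2|+3|V_3|=2\gamma(T)+1$.
\begin{itemize}
\item If $V_3=\emptyset$, the weight is even, which is impossible.
\item If $|V_3|\ge 2$, we argue that $V_2\cup V_3$ is a dominating set. A vertex of $V_0$ has a neighbour in $V_2\cup V_3$ by the definition of a double Roman dominating function. Hence $\gamma(T)\le |V_2|+|V_3|$, so $2\gamma(T)+1\le 2|V_2|+2|V_3|+1<2|V_2|+3|V_3|$, a contradiction.
\end{itemize}
Hence $|V_3|=1$, say $V_3=\{h\}$, and $D=V_2\cup\{h\}$ is a minimum dominating set with $|D|=\gamma(T)$.

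The key step is to exploit the minimality of $f$ to pin down the structure. First, every $v\in V_2$ must have a private vertex that needs it. Otherwise we could lower $v$ to $0$, provided $v$ has a neighbour in $V_3$ or two neighbours in $V_2$, and so obtain a lighter function. Second, consider reassigning $h$ to $2$ and each $v\in V_2$ appropriately, and compare with $3\gamma$-type constructions based on other dominating sets. We aim to show:
\begin{itemize}
\item every $v\in V_2$ is adjacent to $h$;
\item each $v\in V_2$ has exactly one neighbour other than $h$, namely a leaf;
\item every other vertex is a leaf adjacent to $h$.
\end{itemize}
Concretely, let $x\in V_0$ not adjacent to $h$. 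Then $x$ needs two neighbours in $V_2$. If $x$ is not a leaf, we would find a dominating set of size $\gamma(T)-1$ or a contradiction with $T$ being acyclic. Similarly, suppose some $v\in V_2$ is not adjacent to $h$. Take the path from $v$ to $h$. Then, either by replacing a $V_2$-pair sharing a $V_0$-neighbour, or by promoting a vertex to $3$, we get a double Roman dominating function of weight below $2\gamma+1$, or a dominating set smaller than $\gamma(T)$, which contradicts Proposition \ref{pro3} or the minimality of $\gamma(T)$.

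I expect this structural step to be the main obstacle: showing that no $V_0$ vertex is dominated solely by two $V_2$ vertices, and that no $V_2$ vertex lies at distance at least $2$ from $h$. The first thing I would try is a leaf-level induction. Root $T$ at $h$, take a deepest vertex, and show that its configuration forces a strictly cheaper function unless it is a foot attached through a subdivision vertex to $h$. Once this is established, $T$ is a head $h$ with leaves and pendant $P_2$'s. At least one leaf of $h$ must be unsubdivided (else $\gamma=|V_2|+1$ fails to be attained). That is exactly a wounded spider.
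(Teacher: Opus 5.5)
The paper does not prove this proposition; it quotes it from \cite{Ahangar2017, Zhang2018}. Its own analogous argument, for $\gamma_{dI}$ in Theorem~\ref{main th3}, is a minimal-counterexample induction. So your direct argument must stand on its own, and as written it does not.

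The opening of your necessity direction is sound. Parity excludes $V_3=\emptyset$, and the count $2|V_2|+2|V_3|+1<2|V_2|+3|V_3|$ excludes $|V_3|\ge 2$. Hence $V_3=\{h\}$, $|V_2|=\gamma(T)-1$, and $D=V_2\cup\{h\}$ is a $\gamma$-set.

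Everything after that rests on a misreading of the definition. A vertex of value $0$ needs \emph{two} neighbours in $V_2$ or one in $V_3$; a single neighbour in $V_2$ does not suffice. Consequently your sufficiency function is not a double Roman dominating function: every foot vertex receives $0$, while its only neighbour, the subdivision vertex, has value $2$. The $2$'s must go on the foot vertices instead; the subdivision vertices and unsubdivided leaves are then covered by the head. Moreover, sufficiency needs $\gamma(T)\ge s+1$, but you only exhibited a dominating set of size $s+1$. The lower bound comes from the pairwise disjoint closed neighbourhoods of the $s$ feet and of one unsubdivided leaf. This is exactly where ``at most $t-1$ edges'' is used; a healthy spider has $\gamma=s$.

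In the necessity direction, the same misreading makes your structural target impossible. A vertex of $V_2$ adjacent to $h$ with a pendant leaf of value $0$ violates the definition. Your worry about leaves in $V_0\setminus N(h)$ is also empty, since a leaf cannot have two neighbours in $V_2$. The truth is the reverse: $V_0\subseteq N(h)$, and $V_2$ consists of leaves at distance $2$ from $h$. Beyond the wrong target, this step is only announced (``we aim to show'') and never carried out, so the heart of the proof is missing.

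The step can be supplied within your framework using only the minimality of $D$. First show $V_0\subseteq N(h)$.
\begin{enumerate}
\item Root $T$ at $h$ and take $x\in V_0\setminus N(h)$ of maximum depth. Every descendant of $x$ lies in $V_2$.
\item No leaf of $V_2$ may hang from a vertex of $D$, and two $V_2$-leaves on a common support may be traded for that support. Hence $x$ has exactly one child, a leaf $v\in V_2$, and its parent $y$ lies in $V_2$.
\item The same reasoning shows that all children of $y$ are such vertices $x_1,\dots,x_m$, with leaf children $v_1,\dots,v_m$.
\item Then $(D\setminus\{y,v_1,\dots,v_m\})\cup\{x_1,\dots,x_m\}$ still dominates $T$. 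The only vertex to check is the parent of $y$: it is $h$, lies in $V_2$, is adjacent to $h$, or has a second $V_2$-neighbour. This set has $\gamma(T)-1$ vertices, a contradiction.
\end{enumerate}
Once $V_0\subseteq N(h)$, finish as follows.
\begin{enumerate}
\item Each $v\in V_2$ must be its own private neighbour with respect to $D$, so $N(v)\subseteq V_0\subseteq N(h)$.
\item Acyclicity makes $v$ a leaf attached to a neighbour of $h$, and distinct such leaves have distinct supports.
\item If every neighbour of $h$ supported one, $N(h)$ would dominate $T$ with $\gamma(T)-1$ vertices.
\end{enumerate}
Hence $T$ is a wounded spider with head $h$.
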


\begin{theorem}[\cite{Mojdeh2020}]\label{th5}
 For any tree $T$, $\gamma_{dI}(T) = \gamma_{dR}(T)$.
\end{theorem}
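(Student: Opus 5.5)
The inequality $\gamma_{dI}(T)\le\gamma_{dR}(T)$ holds for every graph, since every double Roman dominating function is a double Italian dominating function. So the plan is to prove the reverse inequality $\gamma_{dR}(T)\le\gamma_{dI}(T)$ for trees. I would argue by induction on the order $n$ of $T$, following the structural approach used for Proposition \ref{pro3} and Proposition \ref{pro4}. The small cases (stars, double stars, and trees of diameter at most $3$) are checked directly: there a $\gamma_{dI}$-function can be written down explicitly and compared with a double Roman dominating function of the same weight.

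For the induction step, fix a $\gamma_{dI}$-function $f$ of $T$. Among all such functions, choose one with $|V_1|$ as small as possible, and then with $|V_3|$ as large as possible. The goal is to modify $f$, without increasing its weight, into a function with no vertex of value $1$ that is "helped" only by other $1$'s. These are exactly the configurations allowed by the Italian condition but not by the Roman one:
\begin{enumerate}
\item[(a)] a vertex $u\in V_0$ whose closed-neighbourhood sum reaches $3$ using at least one neighbour in $V_1$;
\item[(b)] a vertex $u\in V_1$ with no neighbour in $V_2\cup V_3$.
\end{enumerate}

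To remove these configurations, I would take a diametral path $v_0v_1\cdots v_d$ and analyse the branch at $v_2$ (or $v_3$), as in the standard tree arguments. Leaves adjacent to $v_1$ force $f(v_1)+f(\text{leaf})\ge 3$ locally. One then shows that the weight on the subtree $T_{v_1}$ (or $T_{v_2}$) can be shifted so that $v_1$ or $v_2$ carries a $3$ or a $2$, and the leaves carry $0$, with no increase in total weight. After this shift, delete the branch and apply the induction hypothesis to the remaining smaller tree $T'$. This gives a double Roman dominating function on $T'$ of weight at most $f(V(T'))$ plus a correction term. Finally, extend it back to $T$ using the local $2$'s and $3$'s placed on the deleted branch, and check that the extended function is a double Roman dominating function of weight at most $\gamma_{dI}(T)$.

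The main obstacle is configurations (a) and (b) in the interior of the tree. Examples are a path of $1$'s where each $1$ is supported by two neighbouring $1$'s, or a $0$ with three neighbours in $V_1$. Here the natural replacement is to turn a vertex in $V_1$ into $2$ or $3$ and some of its $V_1$-neighbours into $0$. The difficulty is that this can shift the deficit onto other vertices that relied on those $1$'s. What I would try first is a charging argument: each maximal component of $T[V_1]$ is a subtree, and I would replace it by assigning $3$ to a minimum dominating set of that component. I would then show that the resulting weight $3\gamma$ of the component is at most its number of vertices plus the slack coming from neighbouring $0$'s. I expect this counting inequality to be the delicate point. If it fails for some component shape, those shapes are exactly where the claimed equality would break down, and they would need to be examined separately.
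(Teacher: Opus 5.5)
The statement you are trying to prove is false. This paper cites it only in order to refute it: Theorem~\ref{main th1} exhibits a tree $T_1$ with $\gamma_{dI}(T_1)=10<11=\gamma_{dR}(T_1)$. To build $T_1$, take an edge $u_1u_2$, attach pendant paths $u_1v_1w_1$, $u_1v_2w_2$ and $u_2v_3w_3$, and attach a leaf $v_4$ at $u_2$. Set $f(w_1)=f(w_2)=f(w_3)=2$, $f(u_1)=1$, $f(u_2)=3$ and $f=0$ elsewhere. This is a double Italian dominating function of weight $10$: $v_1$ and $v_2$ each collect exactly $2+1$, and $u_1$ collects $1+3$. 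For a double Roman function $g$, each pendant pair $\{w_i,v_i\}$ and $\{v_4,u_2\}$ has weight at least $2$. Equality holds only if the leaf gets $2$ and the support gets $0$, and then the support needs a further neighbour in $V_2\cup V_3$. If the pair at $v_1$ or at $v_2$ is tight, then $g(u_1)\ge 2$; otherwise both pairs weigh at least $3$. Either way, $u_1$ together with these two pairs carries at least $6$. If the pair at $v_3$ is tight, then $g(u_2)\ge 2$, so the pair $\{v_4,u_2\}$ is not tight; hence these two pairs carry at least $5$. This gives $\gamma_{dR}(T_1)\ge 11$, so no argument of the kind you outline can be completed.

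The failure sits exactly where you placed the difficulty. First, your local claim that leaves at $v_1$ force $f(v_1)+f(\text{leaf})\ge 3$ is wrong for Italian functions. A pendant pair can carry only $2$ (leaf $2$, support $0$) as long as the next vertex inward contributes just $1$, whereas the Roman condition requires $2$ there. Second, a single vertex of value $1$ can serve two such pendant pairs at once while itself being satisfied by a neighbouring $3$. This is your configuration (a) with zero slack. In $T_1$ the only component of $T[V_1]$ is $\{u_1\}$, and the zeros $v_1,v_2$ next to it have closed-neighbourhood sum exactly $3$. Your charging inequality would therefore need $3\gamma(\{u_1\})=3\le 1+0$, which fails. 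Even the cheapest Roman repair, raising $u_1$ to $2$, costs an extra unit, and the lower bound above shows that no global redistribution avoids it. So the shapes where your counting inequality fails are not special cases to be examined separately; they are genuine counterexamples. What survives in general is only $\gamma_{dI}(T)\le\gamma_{dR}(T)$. The paper replaces the false equality by the bound $\gamma_{dI}(T)\ge 2\gamma(T)+1$ of Theorem~\ref{main th2}.
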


Since $\gamma_{dI}(G) \le \gamma_{dR}(G)$, if there exists a $\gamma_{dI}$-function on $G$ that is also a double Roman dominating function, then this function is also a $\gamma_{dR}$-function and $\gamma_{dR}(G) = \gamma_{dI}(G)$. Before the discussion, we first present an important lemma, which is frequently used in the following section.
\begin{lemma}\label{le6}
There exist a $\gamma_{dI}$-function and a $\gamma_{dR}$-function on $G$ such that every leaf and its unique neighbor are assigned values $2$ and $0$, or $0$ and $3$, respectively, and every strong support vertex is assigned a value 3.
\end{lemma}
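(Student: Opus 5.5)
The plan is to start from an arbitrary $\gamma_{dI}$-function (respectively $\gamma_{dR}$-function) $f$ and normalize it one support vertex at a time. Each step will only \emph{raise} the value of a support vertex (to $3$) and \emph{lower} values of leaves adjacent to that support vertex. Each step will keep the function a double Italian (respectively double Roman) dominating function, and will not increase the weight. Since $f$ is minimum, the weight is in fact unchanged, so the final function is again a $\gamma_{dI}$- (respectively $\gamma_{dR}$-) function. I read the statement as saying that each leaf $v$ with support $u$ satisfies $(f(v),f(u))\in\{(2,0),(0,3)\}$. I will assume $G$ has at least three vertices. For $G=P_2$ the pair $(0,3)$ is realized directly by a minimum function.

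Fix a leaf $v$ with support vertex $u$. Since $N[v]=\{u,v\}$, the double Italian condition gives $f(v)+f(u)\ge 3$ whenever $f(v)\le 1$; in the double Roman setting it gives $f(u)=3$ if $f(v)=0$, and $f(u)\ge 2$ if $f(v)=1$. The cases are as follows.
\begin{enumerate}
\item[(a)] If $f(v)=3$, or $f(v)=2$ with $f(u)\ge 1$, or $f(v)=1$ (forcing $f(u)\ge 2$), redefine $f(v)=0$ and $f(u)=3$. The weight does not increase, since $f(u)+f(v)\ge 3$ before the change.
\item[(b)] If $f(v)=0$, then $f(u)=3$ is forced, which is already the desired configuration.
\item[(c)] The only remaining possibility is $f(v)=2$, $f(u)=0$, which is the allowed configuration $(2,0)$.
\end{enumerate}

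The key verification is that the change in (a) preserves the dominating property in both settings. Every vertex whose value decreased is a leaf adjacent to $u$, and it is now dominated by $u$ with value $3$; this suffices for both the Italian and the Roman conditions. Every other vertex $w$ has $f(w)$ unchanged and a neighbourhood sum that did not decrease, because only $u$ and leaves of $u$ changed. In the double Roman case, $w\ne v$ also keeps every neighbour in $V_2\cup V_3$: $u$ only increased, and leaves of $u$ are not adjacent to $w$ unless $w=u$. Moreover, processing another leaf later never lowers the value of a support vertex already set to $3$. It never touches a leaf $v'$ in configuration $(2,0)$ either, except via case (a) at that same support, which occurs only if that support's value rises. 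Hence after finitely many steps every leaf–support pair is $(2,0)$ or $(0,3)$.

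For a strong support vertex $u$ with leaves $v_1,v_2$, suppose after normalization $f(u)=0$, so $f(v_1)=f(v_2)=2$. Then redefine $f(u)=3$ and set all leaves of $u$ to $0$. By the same verification this is still dominating, and it lowers the weight by at least $1$, contradicting minimality. So $f(u)=3$. I expect the main obstacle to be the careful bookkeeping in the double Roman case, namely checking that no vertex loses its required $V_2\cup V_3$ neighbour. The structural observation that changes are confined to $u$ and its leaves, with $u$ only increasing, handles this.
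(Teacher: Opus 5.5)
Your proposal is correct and is essentially the paper's argument. You replace every leaf--support pair not of the form $(2,0)$ or $(0,3)$ by $(0,3)$, check that the weight does not increase and domination is preserved, and then use a weight count to force the value $3$ on strong support vertices; you verify the preservation of the dominating conditions and the iteration over all leaves more carefully than the paper, which only says ``clearly.'' One correction to your aside: in $P_2$ both vertices are leaves, so the assignment $(0,3)$ read from the other end is $(3,0)$, and no function on $P_2$ (or on any graph with a $K_2$ component) satisfies the statement. That case must therefore be excluded rather than ``realized directly'' --- an oversight the paper shares.
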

\noindent\textit{Proof.} Let $f$ be a $\gamma_{dI}$-function, $v$ be a leaf of $G$, and $u$ be its unique neighbor. By the definition of $f$, if $f(v) = 0$, then $f(u) = 3$. If $f(v) = 1$, then $f(u) = 2$. Redefine a function $g$ such that $g(v) = 0$, $g(u) = 3$, and $g(w)=f(w)$ for all other vertices. Clearly, $g$ is also a double Italian dominating function with the same weight. If $f(v) = 3$, then $f(u) = 0$. Similarly, redefine a function $g$ such that $g(v) = 0$, $g(u) = 3$, and $g(w)=f(w)$ for all other vertices. Clearly, $g$ is also a double Italian dominating function with the same weight. If $f(v) = 2$ and $f(u) = 1$, then replace them with $0$ and $3$ as above. Hence if $f$ is a $\gamma_{dI}$-function, then $f(v) = 2$ and $f(u) = 0$ or $f(v) = 0$ and $f(u) = 3$.

By Proposition \ref{pro1}, let $f$ be a $\gamma_{dR}(G)$-function with no vertex assigned value $1$. Let $v$ be a leaf vertex of $G$ and $u$ be its unique neighbor. By the definition of $f$, if $f(v) = 0$, then $f(u) = 3$ and if $f(v) = 2$, then $f(u) = 0$. If $f(v) = 3$, then $f(u) = 0$. Similarly, redefine a function $g$ such that $g(v) = 0$, $g(u) = 3$, and $g(w)=f(w)$ for all other vertices. Clearly, $g$ is also a double Italian dominating function with the same weight. Hence if $f$ is a $\gamma_{dR}$-function, then $f(v) = 2$ and $f(u) = 0$ or $f(v) = 0$ and $f(u) = 3$.

Let $w$ be a strong support vertex  of $G$. Since $w$ has at least two leaf neighbors, if any double Italian dominating function or double Roman dominating function of $T$ assigns a value less than $3$ to $w$, then the total weight of assigned to $w$ and its leaf neighbors is at least $4$. This implies that any $\gamma_{dI}$-function or $\gamma_{dR}$-function of $T$ will assign a $3$ to $u$ and $0$ to its leaf neighbors.$\square$ \\

\section{Main results}

In this section we present a tree for which the two domination numbers are not equal. The \textit{double star} $S_{p,q}$ is the tree with exactly two adjacent non-leaf vertices (central vertices), one of which is adjacent to $p$ leaves and the other to $q$ leaves. Let $u_1$ and $u_2$ be the two central vertices of the double star $S_{2,2}$, and $T_1$ be the tree obtained by subdividing the three pendant edges of the double star $S_{2,2}$, as shown in the following figure.

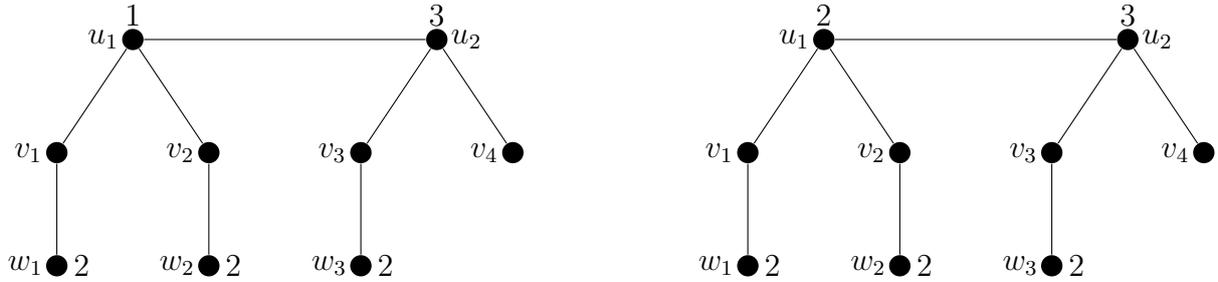
\begin{figure}[h]
\centering
\begin{tikzpicture}[level distance=1.5cm,
                    level 1/.style={sibling distance=2cm},
                    level 2/.style={sibling distance=1cm},
                    every node/.style={circle,fill=black,minimum size=8pt,inner sep=0pt}]
  \node[label=above: 1,label=left:$u_1$](n1) {}
    child { node [label=left:$v_1$] {}
      child { node[label=left:$w_1$, label=right:2 ] {} }
    }
    child { node[label=left:$v_2$] {}
      child { node[label=left:$w_2$,label=right:2] {} }
    };
  \node[label=above:3,label=right:$u_2$] (n3) at (4,0) {}
    child { node [label=left:$v_3$]{}
      child { node[label=left:$w_3$,label=right:2] {} }
    }
    child { node[label=left:$v_4$] {}
    };
    \draw (n1) -- (n3);
\end{tikzpicture}
\hspace{2cm}
\begin{tikzpicture}[level distance=1.5cm,
                    level 1/.style={sibling distance=2cm},
                    level 2/.style={sibling distance=1cm},
                    every node/.style={circle,fill=black,minimum size=8pt,inner sep=0pt}]
  \node[label=above: 2,label=left:$u_1$](u) {}
    child { node [label=left:$v_1$] {}
      child { node[label=left:$w_1$, label=right:2 ] {} }
    }
    child { node[label=left:$v_2$] {}
      child { node[label=left:$w_2$,label=right:2] {} }
    };
  \node[label=above:3,label=right:$u_2$] (w) at (4,0) {}
    child { node [label=left:$v_3$]{}
      child { node[label=left:$w_3$,label=right:2] {} }
    }
    child { node[label=left:$v_4$] {}
    };
  \draw (u) -- (w);
\end{tikzpicture}
\caption{$\gamma_{dI}(T_1) = 10$  and  $\gamma_{dR}(T_1) = 11$ }
\end{figure}

\begin{theorem}\label{main th1}
There exists a tree $T_1$ such that $\gamma_{dI}(T_1) < \gamma_{dR}(T_1)$.
\end{theorem}
\noindent\textit{Proof.} Recall that $u_1$ and $u_2$ are the two central vertices of the double star $S_{2,2}$, and $T_1$ be the tree obtained from a double star $S_{2,2}$ by subdividing two pendant edges incident to $u_1$, and one pendant edge incident to $u_2$. Let $v_1$ and $v_2$ be the other two neighbors of $u_1$, $v_3$ and $v_4$ be the other two neighbors of $u_2$, and $w_i$ be the leaf adjacent to $v_i$, for $i = 1, 2, 3$ in $T_1$. Since one pendant edge incident to $u_2$ is not subdivided, $v_4$ is also a leaf.

By Lemma \ref{le6}, there exists a $\gamma_{dI}$-function $f$ such that either $f(w_i) = 2$ and $f(v_i) = 0$, or $f(w_i) = 0$ and $f(v_i) = 3$, for $i = 1, 2, 3$, and either $f(v_4) = 2$ and $f(u_2) = 0$, or $f(v_4) = 0$ and $f(u_2) = 3$.

If $f(w_3) = 2$ and $f(v_3) = 0$, then by the definition of a double Italian dominating function, $f(w_3)+f(u_2) \ge 3$, which implies $f(u_2) \ge 1$. From the above discussion, we have $f(u_2) = 3$ and $f(v_4) = 0$, so $f(u_2) + f(v_3) + f(w_3) + f(v_4) = 5$. If $f(w_3) = 0$ and $f(v_3) = 3$, then $f(u_2) + f(v_3) + f(w_3) + f(v_4) \ge 5$. Similarly, we obtain $f(u_1) + f(v_1) + f(w_1) \ge 3$ and $f(v_2) + f(w_2) \ge 2$. Hence $\gamma_{dI}(T_1) \ge 10$.

On the other hand, let $f(w_i) = 2$ for $i = 1, 2, 3$, $f(v_i) = 0$ for $i = 1, 2, 3, 4$, $f(u_1) = 1$, and $f(u_2) = 3$. It is straightforward to verify that this is a double Italian dominating function with weight 10. Thus $\gamma_{dI}(T_1) = 10$.

Similarly, by Lemma \ref{le6}, there exists a $\gamma_{dR}(T_1)$-function $g$ such that either $g(w_i) = 2$ and $g(v_i) = 0$, or $g(w_i) = 0$ and $g(v_i) = 3$, for $i = 1, 2, 3$, and either $g(v_4) = 2$ and $g(u_2) = 0$, or $g(v_4) = 0$ and $g(u_2) = 3$.

If $g(w_3) = 2$ and $g(v_3) = 0$, then by the definition of a double Roman dominating function, $g(u_2) \ge 2$. From the above discussion, we have $g(u_2) = 3$ and $g(v_4) = 0$, so $g(u_2) + g(v_3) + g(w_3) + g(v_4) = 5$. If $g(w_3) = 0$ and $g(v_3) = 3$, then $g(u_2) + g(v_3) + g(w_3) + g(v_4) \ge 5$. Similarly, if $g(w_i) = 2$ and $g(v_i) = 0$, for $i = 1$ or $2$, then by the definition of a double Roman dominating function, $g(u_1) \ge 2$, and so $g(u_1) + g(v_1) + g(w_1) + g(v_2) + g(w_2) \ge 6$. If $g(w_i) = 0$ and $g(v_i) = 3$ for $i = 1,2$, then $g(u_1) + g(v_1) + g(w_1) + g(v_2) + g(w_2) \ge 6$. Hence $\gamma_{dR}(T_1) \ge 11$.

On the other hand, let $g(w_i) = 2$ for $i = 1, 2, 3$, $g(v_i) = 0$ for $i = 1, 2, 3, 4$, $g(u_1) = 2$, and $g(u_2) = 3$. It is straightforward to verify that this is a double Roman dominating function with weight 11. Thus $\gamma_{dR}(T_1) = 11$ and $\gamma_{dI}(T_1) < \gamma_{dR}(T_1)$. $\square$ \\

By theorem \ref{main th1}, we conclude that the statement $\gamma_{dI}(T) = \gamma_{dR}(T)$ for any tree $T$ is incorrect.

Let $u_1$ and $u_2$ be the two central vertices of the double star $S_{p,q}$, where $p\ge 2$ and $q\ge 2$. Let $T$ be the tree obtained from a double star $S_{p,q}$ by subdividing all pendant edges incident to $u_1$, and subdividing at most $q$ pendant edges incident to $u_2$. By a similar argument, we have $\gamma_{dI}(T) < \gamma_{dR}(T)$.

We now present a lower bound on the double Italian domination number of any non-trivial tree T.

\begin{theorem}\label{main th2}
If $T$ is a non-trivial tree, then $\gamma_{dI}(T) \ge 2\gamma(T)+1$.
\end{theorem}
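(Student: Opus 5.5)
We prove $\gamma_{dI}(T)\ge 2\gamma(T)+1$ by strong induction on the order $n$ of $T$, always working with a $\gamma_{dI}$-function $f$ normalized as in Lemma~\ref{le6}: every leaf and its support vertex receive $(2,0)$ or $(0,3)$, and every strong support vertex receives $3$.

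\textbf{Small cases.} If $T$ is a star $K_{1,t}$, then $\gamma(T)=1$. Every double Italian dominating function has weight at least $3$, since any vertex with value at most $1$ needs closed-neighbourhood sum at least $3$. Hence $\gamma_{dI}(T)\ge 3=2\gamma(T)+1$. If $T$ is a double star, then $\gamma(T)=2$, and a short check with Lemma~\ref{le6} gives weight at least $5$.

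\textbf{Reduction for $\operatorname{diam}(T)\ge 4$.} Root $T$ at one end $r$ of a longest path $r\dots w v u x$, where $x$ is a leaf. All children of $u$ are then leaves, and all descendants of $v$ lie within distance $2$ of $v$. The goal is to delete a suitable subtree $T_x$ so that the remainder $T'=T-T_x$ is non-trivial and satisfies:
\begin{itemize}
\item[(a)] $\gamma(T)\le \gamma(T')+k$, and
\item[(b)] $f$ can be modified into a double Italian dominating function $f'$ of $T'$ with $f'(V(T'))\le f(V(T))-2k$,
\end{itemize}
for some $k\ge 1$. Induction then gives
\[
f(V(T))\ \ge\ \gamma_{dI}(T')+2k\ \ge\ 2\gamma(T')+1+2k\ \ge\ 2\gamma(T)+1 .
\]

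\textbf{Case analysis.}
\begin{itemize}
\item[(1)] \emph{$u$ is a strong support vertex.} Then $f(u)=3$ and its leaves get $0$. Remove $u$ together with its leaf children, with $k=1$. Since $f(u)=3$, the weight drops by $3\ge 2$. For (a), any dominating set of $T'$ extends by adding $u$.
\item[(2)] \emph{$u$ has exactly one leaf child $x$.} Here $(f(x),f(u))\in\{(2,0),(0,3)\}$, so $f(x)+f(u)\ge 2$. Remove $\{u,x\}$ with $k=1$.
\item[(3)] \emph{Support vertices of $T'$ lose mass.} If $f(u)=3$ and $v$ relied on $u$, we raise $f'(v)$ by the deficit. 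That deficit is at most $3$, but removing $u$ also freed weight $3$. We handle this by removing all children of $v$ and their subtrees simultaneously and placing value $3$ on $v$ (or on $w$) in $f'$. This reduction subtracts at least $2$ times the number of support children of $v$, while $\gamma$ grows by at most that number plus possibly one for $v$ itself.
\end{itemize}

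\textbf{Main obstacle.} The hardest step is the accounting when $v$ has both leaf children and support children, and when $f$ places $1$'s (values $1$ at $v$ or $w$). Then deleting $T_v$ may leave $w$ under-dominated in $T'$. I would first try to transfer weight to $w$, setting $f'(w)=\min(3,f(w)+f(v))$, and show that the total weight in $T_v$ is at least $2\gamma(T_v\text{-part})+f(v)$. This is a local bound on spiders rooted at $v$, checked directly from the normalization in Lemma~\ref{le6}. If that fails because $T'$ becomes trivial (when $\operatorname{diam}(T)=4$ and $w=r$'s neighbour), we treat $T$ as a spider-like tree directly: $\gamma(T)$ equals the number of support vertices, possibly plus $1$, and each support vertex's pendant pair contributes at least $2$ while the centre forces an extra unit.
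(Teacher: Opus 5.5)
\textbf{There is a genuine gap.} Condition (b) is never established in the one case that matters: the end support vertex $u$ carries $3$ and $v$ depends on it.

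Your case (2) is fine when $(f(x),f(u))=(2,0)$. Then $v$ loses a neighbour of weight $0$, and $f$ simply restricts to $T-\{u,x\}$. But when $(f(x),f(u))=(0,3)$ and $f(v)\le 1$, deleting $\{u,x\}$ can leave $v$ with closed-neighbourhood sum below $3$. Repairing $v$ costs up to $2$, so the net drop is only $1<2k$. Case (1) has the same defect: a strong support vertex also carries $3$, and $v$ may rely on it.

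Case (3) does not repair this. Putting $3$ on $v$ can cancel the whole saving: if $u$ is the only child of $v$ and $f(v)=0$, the drop is $0$. Your own count (drop at least $2s$, while $\gamma$ grows by up to $s+1$) yields only $f(V(T))\ge 2\gamma(T)-1$.

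The fallback local bound is also false. Let $v$ have a leaf child $z$ and a child $u$ with a single leaf $x$, and set $f(v)=3$, $f(z)=f(u)=0$, $f(x)=2$. Then $f$ has weight $5$ on $T_v$, while $2\gamma(T_v)+f(v)=7$. This configuration really occurs: attach $v$ to the path $r\,w$ and take $f(r)=2$, $f(w)=0$, a $\gamma_{dI}$-function of weight $7=2\gamma(T)+1$.

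\textbf{The missing idea is an extremal choice of $f$ that excludes the bad configuration.}
\begin{enumerate}
\item Handle strong supports by deleting a \emph{single} leaf $\ell$, not the support. The normalized $f$ (support $3$, leaves $0$) restricts to $T-\ell$, and $\gamma(T-\ell)=\gamma(T)$, so induction applies directly. After this, every support has exactly one leaf, so $d(u)=2$.
\item Among all $\gamma_{dI}$-functions, choose $f$ with $f(x)+f(u)$ minimum. If $(f(x),f(u))=(2,0)$, delete $\{x,u\}$ as you do.
\item If $(f(x),f(u))=(0,3)$, then $f(v)=0$. Otherwise $x\mapsto 2$, $u\mapsto 0$, $v\mapsto\min\{f(v)+1,3\}$ is a double Italian dominating function of no larger weight with smaller $f(x)+f(u)$.
\item Similar exchanges force $d(v)=2$. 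A leaf child $z$ of $v$ must have $f(z)\ge 2$, so you can switch to $v\mapsto 3$, $z\mapsto 0$, $u\mapsto 0$, $x\mapsto 2$. If $v$ has $s\ge 2$ support children, each pendant pair has weight at least $3$ because $f(v)=0$; replacing these by $2$ on $v$, $0$ on the supports and $2$ on their leaves costs $2+2s\le 3s$.
\item Now $T-\{x,u,v\}$ is a non-trivial tree. The function $f$ restricts to it, since $w$ loses only a neighbour of weight $0$. The weight drops by $3$ and $\gamma$ by at most $1$, which closes the induction.
\end{enumerate}
This is exactly how the paper's proof proceeds.
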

\noindent\textit{Proof.} Let $T$ be a non-trivial tree of order $n$. We proceed by induction on $n$. Note that if $T$ is the star $K_{1,n-1}$, then $\gamma(K_{1,n-1})=1$ and $\gamma_{dI}(K_{1,n-1})=3=2\gamma(K_{1,n-1})+1$. If $\text{diam}(T)=3$, then $T$ is a double star $S_{p,q}$ (where $1\le p\le q$), and $\gamma(S_{p,q})=2$ and $\gamma_{dI}(S_{p,q})=5$ when $p=1$ and $\gamma_{dI}(S_{p,q})=6$ when $p\ge 2$. We have $\gamma_{dI}(S_{1,q})=2\gamma(S_{1,q})+1$ and $\gamma_{dI}(S_{p,q})> 2\gamma(S_{p,q})+1$ when $p\ge 2$. Hence, we may assume that the diameter of $T$ is at least $4$. This implies that $n\ge 5$.

Assume that any tree $T'$ with order $2\le n'<n$ has $\gamma_{dI}(T')\ge 2\gamma(T')+1$.

We first show that the result holds if $T$ has a strong support vertex. Assume that $T$ has a strong support vertex $u$ adjacent to a leaf $v$. Since $u$ is a strong support vertex, any $\gamma_{dI}$-function of $T$ will assign a $3$ to $u$ and $0$ to its leaf neighbors. Consider $T'=T-v$. Since $u$ is a support in $T'$, we have $\gamma(T')=\gamma(T)$ and $\gamma_{dI}(T') \le \gamma_{dI}(T)$. Moreover, $T'$ is a non-trivial tree, so we can apply our inductive hypothesis showing that $\gamma_{dI}(T)\ge\gamma_{dI}(T')\ge 2\gamma(T')+1=2\gamma(T)+1$. Since the desired result holds for graphs having a strong support vertex, henceforth we may assume that every support vertex is adjacent to exactly one leaf.

Among all leaves of $T$, choose $r$ and $u$ to be two leaves such that the distance between $r$ and $u$ is the diameter of $T$. We root the tree $T$ at vertex $r$. Let $w$ be the unique neighbor of $u$, and $x$ be the parent of $w$. Note that by our choice of $u$, every child of $w$ is a leaf of $T$. Since $T$ has no strong support vertices, it follows that $d(w)=2$.

Among all $\gamma_{dI}$-functions of $T$, let $f$ be one such that $f(u)+f(w)$ is minimized. By lemma \ref{le6}, we have either $f(u)=2$ and $f(w)=0$ or $f(u)=0$ and $f(w)=3$. We consider the following two cases:

\noindent\textbf{Case 1.} $f(u)=2$ and $f(w)=0$.

Let $T'=T-\{u,w\}$, and $f'$ be the restriction of $f$ onto $T'$. Note that $f'$ is a double Italian dominating function of $T'$. Then $\gamma_{dI}(T')\le \gamma_{dI}(T)-2$. Moreover, any dominating set of $T'$ can be extended to a dominating set of $T$ by adding $w$, so $\gamma(T)\le \gamma(T')+1$. By inductive hypothesis, we have $\gamma_{dI}(T)\ge \gamma_{dI}(T')+2\ge 2\gamma(T')+1+2\ge2(\gamma(T)-1)+3=2\gamma(T)+1$, as desired.

\noindent\textbf{Case 2.} $f(u)=0$ and $f(w)=3$.

We claim that $f(x)=0$ and $d(x)=2$. Suppose that $f(x)\ge 1$. Let $g$ be the function such that $g(u)=2$, $g(w)=0$, $g(x)=\hbox{min}\{f(x)+1,3\}$, and $g(v)=f(v)$ for all other vertices. Note that $g$ is a function with weight no more than $f$, and $g(u)+g(w)<f(u)+f(w)$, contradicting our choice of $f$. Thus, $f(x)=0$.

Suppose that $d(x)\ge 3$. As we have established, every child of $x$ is a leaf or a support of a leaf. Since $f(x)=0$, any leaf neighbor of $x$ must be assigned $2$, and every support vertex adjacent to $x$ is assigned $3$ under $f$. If $x$ has one leaf neighbor $z$, then let $g$ be the function such that $g(x)=3$, $g(z)=g(w)=0$, $g(u)=2$ and $g(v)=f(v)$ for all other vertices. Note that $g$ is a function with the same weight as $f$, and $g(u)+g(w)<f(u)+f(w)$, contradicting our choice of $f$.
If $x$ has only two or more support vertices as children, let $g$ be the function obtained assigning $2$ to $x$, $0$ to each of the children of $x$, $2$ to their leaf neighbors, and $g(v)=f(v)$ for all other vertices, then $g$ is a function with weight no more than $f$, and $g(u)+g(w)<f(u)+f(w)$, contradicting our choice of $f$. Thus, $d(x)=2$.

 Consider $T'=T-\{u,w,x\}$. Since $\text{diam}(T)\ge 4$, $T'$ is a non-trivial tree. Further, since any $\gamma$-set of $T'$ can be extended to a dominating set of $T$ by adding the vertex $w$, we have $\gamma(T)\le \gamma(T')+1$. Let $f'$ be the restriction of $f$ onto $T'$. Recall that $f(w)=3$ and $f(u)=f(x)=0$, then $f'$ is a double Italian dominating function of $T'$. Thus, $\gamma_{dI}(T')\le \gamma_{dI}(T)-3$. By inductive hypothesis, we have $\gamma_{dI}(T)\ge \gamma_{dI}(T')+3\ge 2\gamma(T')+1+3 \ge 2(\gamma(T)-1)+4=2\gamma(T)+2$. The result follows.$\square$ \\

We next characterize the class of trees $T$ for which $\gamma_{dI}(T)=2\gamma(T)+1$.

\begin{theorem}\label{main th3}
If $T$ is a non-trivial tree, then $\gamma_{dI}(T) =2\gamma(T)+1$ if and only if $T$ is a wounded spider.
\end{theorem}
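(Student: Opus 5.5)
The sufficiency direction follows directly from earlier results. If $T$ is a wounded spider, then Proposition \ref{pro4} gives $\gamma_{dR}(T)=2\gamma(T)+1$. Since every double Roman dominating function is a double Italian dominating function, $\gamma_{dI}(T)\le \gamma_{dR}(T)=2\gamma(T)+1$. Theorem \ref{main th2} gives the reverse inequality, so $\gamma_{dI}(T)=2\gamma(T)+1$.

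For necessity I will induct on $n$ and rerun the proof of Theorem \ref{main th2}, this time tracking where equality can hold. Stars and $S_{1,q}$ are wounded spiders. The computation in that proof shows that a double star $S_{p,q}$ with $p\ge 2$ has $\gamma_{dI}>2\gamma+1$. So I may assume $\operatorname{diam}(T)\ge 4$ and $\gamma_{dI}(T)=2\gamma(T)+1$.

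\emph{Strong support vertex case.} Suppose $T$ has a strong support vertex $u$ with leaf $v$, and put $T'=T-v$. The chain $\gamma_{dI}(T)\ge\gamma_{dI}(T')\ge 2\gamma(T')+1=2\gamma(T)+1$ must then be tight. By induction $T'$ is a wounded spider, and $u$ is a support vertex of $T'$. If $u$ is the head of $T'$, then $T$ is again a wounded spider and we are done. Otherwise $u$ is a subdivision vertex of $T'$, and then $T$ has two strong-ish supports (the head region and $u$). I must show directly that $\gamma_{dI}(T)\ge 2\gamma(T)+2$. The plan is to use Lemma \ref{le6}: $u$ receives $3$, each other subdivided leg costs at least $2$, and the head together with its leaf neighbours costs at least $3$. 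This totals at least $2(\gamma(T)-1)+3+3$, where $\gamma(T)$ equals the number of subdivision vertices plus one. I also have to handle the degenerate $P_4$ and $P_2$ labellings.

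\emph{No strong support vertex.} Choose $r,u,w,x$ and $f$ as in the proof of Theorem \ref{main th2}, with $d(w)=2$. Case~2 there yields $\gamma_{dI}(T)\ge 2\gamma(T)+2$, so equality forces Case~1: $f(u)=2$ and $f(w)=0$. Then every inequality in Case~1 is tight:
\[
\gamma_{dI}(T')=\gamma_{dI}(T)-2,\qquad \gamma(T)=\gamma(T')+1,\qquad \gamma_{dI}(T')=2\gamma(T')+1,
\]
where $T'=T-\{u,w\}$ is non-trivial because $\operatorname{diam}(T)\ge 4$. By induction $T'$ is a wounded spider, and $T$ is obtained from $T'$ by attaching a pendant $P_2$ at $x$. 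Also $f(x)\ge 1$, because $w$ needs $f(u)+f(x)\ge 3$. If $x$ is the head of $T'$ (with the $P_2$/$P_4$ conventions), then $T$ is a wounded spider. Note that $T$ cannot be a healthy spider here, since $T'$ keeps one unsubdivided leg or is small.

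It remains to rule out the other positions of $x$: a leaf of $T'$, a subdivision vertex, or a foot vertex. In each of these, $T$ contains two vertices of degree at least $3$, or a path configuration like $P_5$ or $P_6$ attached off-center. For these I will compute $\gamma(T)$ explicitly and exhibit the forced weights via Lemma \ref{le6}, giving $\gamma_{dI}(T)\ge 2\gamma(T)+2$. Alternatively, in some of these positions, $\gamma(T)=\gamma(T')$, which contradicts $\gamma(T)=\gamma(T')+1$. I expect this case analysis, together with the subdivision-vertex subcase of the strong support case, to be the main obstacle. The cleanest route is probably to check first whether $\gamma(T)=\gamma(T')$, which happens for example when $x$ is a foot vertex, so that $w$ and the subdivision vertex can share domination. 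Only the remaining positions then need a direct weight count.
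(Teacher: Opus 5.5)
Your sufficiency argument (Proposition~\ref{pro4}, $\gamma_{dI}\le\gamma_{dR}$, and Theorem~\ref{main th2}) is correct. Your reduction of necessity to the situation where $T'$ is a wounded spider also follows the paper. The gap is in everything after that point.

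In the strong-support subcase, you claim that the head $y$ together with its leaves costs at least $3$. This is false when $y$ has exactly one leaf $z$: the normalized pair can be $f(y)=0$, $f(z)=2$, because $y$ is already dominated by $f(u)=3$. Your count then gives only $3+2(k-1)+2=2\gamma(T)+1$, where $k=\gamma(T)-1$ is the number of subdivided legs. The total you wrote, $2(\gamma(T)-1)+3+3$, also miscounts the other legs; there are $\gamma(T)-2$ of them. It even exceeds the true value in examples. Take the spider with two legs of length $2$ and one leg of length $1$, and add an extra leaf at one subdivision vertex: there $\gamma(T)=3$ and $\gamma_{dI}(T)=8$, not $\ge 10$. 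To repair this you need a further observation: $f(y)=0$ forces every other subdivided leg to be labelled $(0,3)$, since its subdivision vertex sees only $2$ from its foot.

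More seriously, your shortcut $\gamma(T)=\gamma(T')$ only disposes of two positions: $x$ a foot, or $x$ the unique leaf of the head. When $x$ is a subdivision vertex, or a leaf of a head with at least two leaves in $T'$, one genuinely has $\gamma(T)=\gamma(T')+1$. A weight argument is then unavoidable, and that is exactly what you defer as ``the main obstacle.''

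The paper handles the subdivision-vertex positions with one observation. If every vertex of $T$ is a leaf or a support vertex, Lemma~\ref{le6} gives a $\gamma_{dI}$-function with values in $\{0,2,3\}$. Such a function is automatically a double Roman dominating function, so $\gamma_{dI}(T)=\gamma_{dR}(T)>2\gamma(T)+1$ by Propositions~\ref{pro3} and~\ref{pro4}, with no counting at all. The hypothesis holds both when $u$ is a subdivision vertex in the strong-support case and when $x$ is a subdivision vertex.

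For $x$ a leaf of a head $y$ with at least two leaves, the paper shifts weight so that $f(y)=3$ and $f(x)=1$. It then deletes $\{u,w,x\}$ as in Case~2 of Theorem~\ref{main th2}, obtaining $\gamma_{dI}(T)\ge 2\gamma(T)+2$. Without arguments of this kind, the necessity direction remains unproved.
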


\noindent\textit{Proof.} Let $T$ be a wounded spider and $u$ be the head vertex. Let $S=\{v:d(u,v)=2\}$ be the set of foot vertices. Since every leaf or its support vertex must be in any dominating set, it is clearly that $S\cup \{u\}$ forms a $\gamma$-set for $T$. Also, if $V_0=V-S-\{u\}$, $V_2=S$, and $V_3=\{u\}$, then $f=(V_0,V_2,V_3)$ is an double Italian dominating function with $f(V)=2\gamma(T)+1$. Therefore, $f$ is a $\gamma_{dI}$-function.

Now we will prove that if $\gamma_{dI}(T)=2\gamma(T)+1$, then $T$ is a wounded spider. Suppose to the contrary, and let $T$ be a smallest counterexample, we have $\gamma_{dI}(T)=2\gamma(T)+1$ and $T$ is not a wounded spider. By the proof of Theorem \ref{main th2}, if $T$ is the star $K_{1,n-1}$, then $\gamma_{dI}(K_{1,n-1})=2\gamma(K_{1,n-1})+1$. If $\text{diam}(T)=3$, then $T$ is the double star $S_{p,q}$ (where $1\le p\le q$). We have $\gamma_{dI}(S_{1,q})=2\gamma(S_{1,q})+1$ and $\gamma_{dI}(S_{p,q})> 2\gamma(S_{p,q})+1$ when $p\ge 2$. $K_{1,n-1}$ and $S_{1,q}$ are wounded spiders.
 Hence, we assume that the diameter of $T$ is at least $4$.

Assume that $T$ has a strong support vertex $u$ adjacent to a leaf $v$. Since $u$ has at least two leaf neighbors, any $\gamma_{dI}$-function of $T$ will assign a $3$ to $u$ and $0$ to its leaf neighbors. Consider $T'=T-v$. Since $u$ is a support in $T'$, we have $\gamma(T')=\gamma(T)$ and $\gamma_{dI}(T') \le \gamma_{dI}(T)$. Moreover, $T'$ is a non-trivial tree, by theorem \ref{main th2}, $\gamma_{dI}(T)\ge\gamma_{dI}(T')\ge 2\gamma(T')+1=2\gamma(T)+1$. Note that $\gamma_{dR}(T)=2\gamma(T)+1$, which implies $\gamma_{dR}(T')=2\gamma(T')+1$. By the minimality of $T$, $T'$ ia a wounded spider. If $u$ is the center of $T'$, then $T$ is also a wounded spider, contradicting the assumption that $T$ is a counterexample. Hence, $u$ is a subdivision vertex of $T'$ and the degree of the center of $T'$ is at least 3. Note that every vertex of $T$ is either a leaf or a support vertex. By lemma \ref{le6}, there exists a $\gamma_{dI}$-function $f$ of $T$, no vertex is assigned the value 1. Note that $f$ is also a $\gamma_{dR}$-function of $T$. Then $\gamma_{dI}(T)=\gamma_{dR}(T)$. By proposition \ref{pro3} and \ref{pro4}, we have $\gamma_{dI}(T)>2\gamma(T)+1$, a contradiction. Hence, we may assume that every support vertex is adjacent to exactly one leaf.

Among all leaves of $T$, choose $r$ and $u$ to be two leaves such that the distance between $r$ and $u$ is the diameter of $T$. We root the tree $T$ at vertex $r$. Let $w$ be the unique neighbor of $u$, and $x$ be the parent of $w$. Note that by our choice of $u$, every child of $w$ is a leaf of $T$. Since $T$ has no strong support vertices, it follows that $d(w)=2$.

Among all $\gamma_{dI}$-functions of $T$, let $f$ be one such that $f(u)+f(w)$ is minimized. By lemma \ref{le6}, we have either $f(u)=2$ and $f(w)=0$ or $f(u)=0$ and $f(w)=3$.
By the case 2 of theorem \ref{main th2}, if $f(u)=0$ and $f(w)=3$, then $\gamma_{dI}(T)\ge 2\gamma(T)+2$, contradicting that $\gamma_{dI}(T)=2\gamma(T)+1$. Hence, $f(u)=2$ and $f(w)=0$.

Let $T'=T-\{u,w\}$, and $f'$ be the restriction of $f$ onto $T'$. Note that $f'$ is a double Italian dominating function of $T'$. Then $\gamma_{dI}(T')\le \gamma_{dI}(T)-2$. Moreover, any dominating set of $T'$ can be extended to a dominating set of $T$ by adding $w$, so $\gamma(T)\le \gamma(T')+1$. By theorem \ref{main th2}, we have $\gamma_{dI}(T)\ge \gamma_{dI}(T')+2\ge 2\gamma(T')+1+2\ge2(\gamma(T)-1)+3=2\gamma(T)+1$. Note that $\gamma_{dI}(T)=2\gamma(T)+1$, which implies that $\gamma_{dI}(T')=2\gamma(T')+1$ and $\gamma(T)=\gamma(T')+1$. By the minimality of $T$, $T'$ is a wounded spider. If $x$ is the head vertex of $T'$, then $T$ is also a wounded spider, contradicting the assumption that $T$ is a counterexample.

If $x$ is a subdivision vertex of $T'$ and the degree of head vertex of $T'$ is at least 3,
then every vertex of $T$ is either a leaf or a support vertex. By lemma \ref{le6}, there exists a $\gamma_{dI}$-function $f$ of $T$, no vertex is assigned the value 1. Note that $f$ is also a $\gamma_{dR}$-function of $T$, and $\gamma_{dI}(T)=\gamma_{dR}(T)$. By proposition \ref{pro3} and \ref{pro4}, we have $\gamma_{dI}(T)>2\gamma(T)+1$, a contradiction.

If $x$ is a foot vertex of $T'$, then let $S$ be the set of foot vertices of $T'$, and $y$ be the head vertex of $T'$. Since every leaf or its support vertex must be in any dominating set, we have $D'=S\cup \{y\}$ forms a $\gamma$-set for $T'$, where $x\in D'$. Let $D=D'\cup \{w\}-\{x\}$. Then $D$ is a $\gamma$-set for $T$. Thus, $\gamma(T)=\gamma(T')$, contradicting that $\gamma(T)=\gamma(T')+1$.

If $x$ is a leaf neighbor of the head vertex $y$, and $y$ has only one leaf neighbor.
Note that $\gamma(T)=\gamma(T')$, contradicting that $\gamma(T)=\gamma(T')+1$. Hence, $x$ is a leaf neighbor of $y$, and $y$ has at least two leaf neighbors. By lemma \ref{le6}, we have either $f(y)=0$ or $f(y)=3$ in $T$. If $f(y)=0$, then any leaf neighbor of $y$ must be assigned $2$. Recall $f(u)=2$ and $f(w)=0$, implying that $f(x)=2$. Let $g$ be the function obtained assigning $3$ to $y$, $0$ to each of the leaf neighbor of $y$, $1$ to $x$, and $g(v)=f(v)$ for all other vertices. Then $g$ is a function with weight no more than $f$ and $g(y)=3$. Without loss of generality, let $f(y)=3$. Note that $f(u)=2$ and $f(w)=0$, we have $f(x)=1$.

Consider $T''=T-\{u,w,x\}$. Since $\text{diam}(T)\ge 4$, $T''$ is a non-trivial tree. Further, since any $\gamma$-set of $T''$ can be extended to a dominating set of $T$ by adding the vertex $w$, $\gamma(T)\le \gamma(T'')+1$. Let $f''$ be the restriction of $f$ onto $T''$. Recall $f(y)=3$, then $f''$ is a double Italian dominating function of $T''$. Thus, $\gamma_{dI}(T'')\le \gamma_{dI}(T)-3$. By theorem \ref{main th2}, we have $\gamma_{dI}(T)\ge \gamma_{dI}(T'')+3\ge 2\gamma(T'')+1+3 \ge 2(\gamma(T)-1)+4=2\gamma(T)+2$, a contradiction.

Therefore, $T$ is a wounded spider. The result follows. $\square$ \\

\section*{Acknowledgements}

This work is  supported by the National Natural Science Foundation of China under grant number 12571381.


\begin{thebibliography}{1}

\bibitem{Ahangar2017}
H.A. Ahangar, M. Chellali, S.M. Sheikholeslami, On the double Roman domination in graphs, Discrete Appl. Math. 232 (2017) 1--7.

\bibitem{Bondy2008}
J.A. Bondy, U.S.R. Murty, Graph Theory, Springer-Verlag, Berlin, 2008.

\bibitem{Beeler2016}
R.A. Beeler, T.W. Haynes, S.T. Hedetniemi, Double Roman domination, Discrete Appl. Math. 211 (2016) 23--29.

\bibitem{Chellali2016}
M. Chellali, T.W. Haynes, S.T. Hedetniemi, A. MacRae, Roman {2}-domination, Discrete Appl. Math. 204 (2016) 22--28.

\bibitem{Cockayne2004}
E.J. Cockayne, P.A. Dreyer, S.M. Hedetniemi, S.T. Hedetniemi, Roman domination in graphs, Discrete Math. 278 (2004) 11--22.

\bibitem{Henning2003}
M.A. Henning, S.T. Hedetniemi, Defending the roman empire--a new strategy, Discrete Math. 266 (2003) 239--251.

\bibitem{Haynes1998}
T.W. Haynes, S.T. Hedetniemi, P.J. Slater, Fundamentals of Domination in Graphs, Marcel Dekker, Inc., New York, 1998.

\bibitem{Kloster2019}
W. Klostermeyer, G. MacGillivray, Roman, Italian, and $2$-domination, J. Combin. Math. Combin. Comput. 108 (2019) 125--146.

\bibitem{Stewart1999}
I. Stewart, Defend the Roman empire!, Sci. Amer. 281 (6) (1999) 136--139.

\bibitem{Mojdeh2020}
D.A. Mojdeh, L. Volkmann, Roman {3}-domination (double Italian domination), Discrete Appl. Math. 283 (2020) 555--564.

\bibitem{Zhang2018}
X.J. Zhang, Z.P. Li, H.Q. Jiang, Z.H. Shao, Double Roman domination in trees, Inf. Process. Lett. 134 (2018) 31--34.

\end{thebibliography}
\end{document}